\documentclass[a4paper,11pt,3p]{article}
\usepackage{graphicx} 
\usepackage{amsmath,amsthm,amscd,amsfonts,amssymb,enumitem,algorithm2e,mathtools,mathrsfs}
\usepackage{latexsym,xcolor}
\usepackage{tcolorbox, lmodern, array}
\usepackage[colorlinks=true, linkcolor=black, anchorcolor=black, citecolor=black, filecolor=black, urlcolor=black]{hyperref}
\usepackage{algorithm2e, float, caption}
\usepackage{graphicx}
\usepackage{stmaryrd}
\usepackage{wrapfig}
\usepackage{relsize}
\usepackage{xcolor}
\newtheorem{thm}{Theorem}[section]
\newtheorem{lemma}[thm]{Lemma}

\newtheorem{rem}[thm]{Remark}

\numberwithin{equation}{section}
\newcommand\blfootnote[1]{%
	\begingroup 
	\renewcommand\thefootnote{}\footnote{#1}%
	\addtocounter{footnote}{-1}%
	\endgroup
}
\newcommand\restr[2]{{
  \left.\kern-\nulldelimiterspace 
  #1 
  \littletaller 
  \right|_{#2} 
  }}

\newcommand{\littletaller}{\mathchoice{\vphantom{\big|}}{}{}{}}

\renewcommand{\epsilon}{\varepsilon}
\title{ Analysis of an inexact Schwarz domain decomposition method in 3D }
\author{Nibedita Ghosh\thanks{Email id: ghosh@igpm.rwth-aachen.de} \\
		Institute for Geometry and Practical Mathematics\\ RWTH Aachen University, 52072 Aachen, Germany}
\date{}
\begin{document}
\maketitle

\begin{abstract}
We consider an inexact version of the Schwarz domain decomposition method in $3$D to solve a Laplace problem in a domain that is the union of two overlapping spheres. We solve each subproblem approximately by projecting its boundary data $L^2$-orthogonally onto the space of spherical harmonics of degree at most $N$. The model problem arises from the ddCOSMO solver used for implicit solvation in computational chemistry. We analyze the Schwarz-spherical domain decomposition method by establishing a new variant of the maximum principle.
\end{abstract}

	 \textbf{Keywords:} Schwarz domain decomposition, Spherical harmonics, Maximum principle
  \blfootnote{2010 \textit{Mathematics Subject Classification.} 65N15, 65N35, 65N80, 65N22, 65F10, 35J05}

\section{Introduction}\label{sec:1}
Implicit (continuum) solvation models are an established tool in computational chemistry for describing the effect of a solvent on a solute molecule, see \cite{lipparini2016perspective, tomasi2005quantum}. Instead of resolving the solvent molecules explicitly, these models represent the solvent as a continuous dielectric medium that surrounds a molecular cavity containing the solute. In many models, this cavity is the union of overlapping balls, each of which is centered at one of the atoms of the solute. In the conductor-like screening model (COSMO) \cite{klamt1993cosmo}, the computation of the electrostatic solvation energy requires the solution of a Laplace equation in this union of balls, with Dirichlet boundary data given by the electrostatic potential of the solute.

Canc\`es et al. proposed to solve the Laplace problem by a Schwarz domain decomposition method in which every ball is a subdomain in the paper \cite{cances2013domain}. The decisive feature of this approach, known as ddCOSMO, is that harmonic functions in a ball have an explicit representation in terms of spherical harmonics. The local Dirichlet problems are therefore discretized by approximating the boundary data on each sphere in the space of spherical harmonics up to a fixed degree $N$, and the corresponding local solutions are available in closed form. This results in a very efficient solver, in particular for large molecules; we refer to \cite{stamm2019make} for a practical guide and to \cite{quan2018domain} for an extension to the polarizable continuum model.

The convergence analysis of Schwarz methods has a long history. For overlapping subdomains and exact subdomain solvers, a classical approach based on the maximum principle is proposed in \cite{lionsschwarz,lions1988schwarz}. For the general theory of Schwarz methods and subspace correction methods, we refer to \cite{toselli2004domain, xu1992iterative}. Motivated by ddCOSMO, Ciaramella and Gander \cite{ciaramella2017analysis, ciaramella2018analysis, ciaramella2018analysis3} analyzed the parallel Schwarz method for chains of fixed-sized subdomains with a growing number of subdomains. Reusken and Stamm \cite{reusken2021analysis} studied the Schwarz method on domains that consist of many overlapping balls of fixed size, as they arise in COSMO, and showed how its rate of convergence depends on local and global geometric descriptors of the domain. Corso et al. derived trace estimates for harmonic functions along circular arcs and applied them to the convergence analysis of the Schwarz method on two overlapping disks in \cite{carvalho2026trace}, including a discrete variant of the method used in COSMO.

Apart from the two-dimensional analysis in \cite{carvalho2026trace}, these works assume that the subdomain problems are solved exactly. In ddCOSMO, however, the local boundary data are replaced by their approximations in a space of spherical harmonics, and this approximation interacts with the Schwarz iteration. Recently, Reusken \cite{reusken2026analysis} analyzed this effect for a two-dimensional model problem, the Schwarz-Fourier method on two overlapping disks, in which the boundary data of each local problem are projected onto a space of trigonometric polynomials. The key ingredient of that analysis is a new variant of the maximum principle for the inexact local solver, which leads to bounds for the contraction number in the maximum norm.

In this paper, we extend this approach to three dimensions, which is the setting relevant for ddCOSMO. We consider the Laplace problem on the union of two overlapping spheres and the Schwarz-spherical domain decomposition method, in which the boundary data of each local problem are projected $L^2$-orthogonally onto the space $\mathcal{S}_N$ of spherical harmonics of degree at most $N$. The main difficulty is that the maximum principle does not hold for the inexact local solver. The solution of the exact local problem is given by the Poisson integral, and the maximum principle follows from the nonnegativity of the Poisson kernel together with its unit mean. For the inexact local solver, the Poisson kernel is replaced by its truncated expansion in Legendre polynomials. This truncated kernel $K_N$ oscillates and takes negative values close to the boundary sphere, which is a manifestation of the Gibbs phenomenon. In contrast to the two-dimensional case, where the truncated kernel is a trigonometric polynomial, the kernel in three dimensions is a sum of Legendre polynomials $L_n$ with coefficients $(2n+1)r^n$, in which the factor $2n+1$ grows with $n$. Hence, the analysis of the sign of $K_N$ requires different arguments.

The main contributions of this paper are as follows.
\begin{itemize}
    \item We formulate the Schwarz-spherical method as a fixed point iteration for the traces on the interior boundaries, which is governed by inexact Dirichlet-to-Dirichlet maps $\Lambda_{i,N}$, $i=1,2$. The total error splits into a discretization error and an iteration error, and we study the iteration error.
    \item We show that the inexact local solution operator $\Delta^{-1}P_N$ is an integral operator with the truncated Poisson kernel $K_N(r\eta,\xi)=\sum\limits_{n=0}^{N}(2n+1)r^nL_n(\eta\cdot\xi)$, which is the $L^2$-orthogonal projection of the Poisson kernel onto $\mathcal{S}_N$ (Lemma~\ref{lemma:fmv}).
    \item Our main result, Theorem~\ref{thm:main}, states that for every odd $N\geq 5$ the truncated kernel satisfies $K_N(r\eta,\xi)\geq 0$ for all $\eta,\xi$ on the unit sphere and all $0\leq r\leq r_N$, with the explicit radius
    \begin{align*}
        r_N=1-\frac{2\ln{(2(N+1))}}{N+1}.
    \end{align*}
    The proof relates the minimum of $K_N(r\eta,\cdot)$ to the antipodal point $\xi=-\eta$, where $K_N$ reduces to an alternating sum with a closed form, and estimates the zeros of this closed form by means of the Lambert $W$ function \cite{hoorfar2008inequalities, hassani2012approximation}.
    \item As a consequence, we obtain a new variant of the maximum principle (Theorem~\ref{thm:nmp}): for piecewise continuous boundary data $f$, the inexact local solution $\Delta^{-1}P_Nf$ is bounded from below and above by the minimum and the maximum of $f$ in the concentric ball of radius $r_N$, that is, everywhere except in a boundary layer of width $\delta_{th}(N)=1-r_N=\mathcal{O}(\ln N/N)$.
    \item In numerical experiments for $N\leq 100$, we compute the largest radius $r_{num}(N)$ for which $K_N$ is nonnegative. For odd $N$, the theoretical width $\delta_{th}(N)$ exceeds the computed width $\delta_{num}(N)=1-r_{num}(N)$ by less than a factor of two, and the quotient $\delta_{num}(N)/\delta_{th}(N)$ tends to increase with $N$. This indicates that the order $\mathcal{O}(\ln N/N)$ is sharp.
    \item We use the new maximum principle to bound the inexact Dirichlet-to-Dirichlet map $\Lambda_{1,N}$ in the maximum norm, which determines the convergence of the Schwarz-spherical method.
\end{itemize}

The paper is organized as follows. In Section~\ref{sec:2}, we recall the solution of the Laplace problem in the unit ball in terms of spherical harmonics and derive the Poisson kernel from its expansion in Legendre polynomials. Section~\ref{sec:3} summarizes the properties of the Poisson kernel that lead to the maximum principle. In Section~\ref{sec:4}, we describe the classical Schwarz method for two overlapping balls and its formulation in terms of Dirichlet-to-Dirichlet maps. The Schwarz-spherical method is introduced in Section~\ref{sec:5}, where we also derive the representation of the inexact local solver by the truncated kernel, prove the positivity of the kernel and the new maximum principle, present the numerical experiments, and apply the maximum principle to the convergence analysis.

\section{Solution of a Laplace problem in a unit ball in 3D}\label{sec:2}

We consider the Laplace problem in a unit ball $B=\{ x\in \mathbb{R}^3 \;|\; |x|<1 \}$ in $3$D   with piecewise continuous boundary data $f$ on $\partial B$. We parametrized the boundary of the ball by the angles $(\theta, \phi)\in [0, \pi]\times [0, 2\pi).$ We assume that the function $f$ has only finitely many discontinuities on $\partial B$ and the points of discontinuities are denoted by $D_f=\{ x_1, x_2, \cdots, x_p \}.$ We further assume that the left-hand and right-hand limits of the function at these points exist, that is $\lim_{(\theta,\phi)\uparrow x_i} f(\theta, \phi)=f_i^-$ and $\lim_{(\theta,\phi)\downarrow x_i} f(\theta, \phi)=f_i^+.$ We allow the case $D_f=\emptyset$. We denote the space of such functions by $\mathcal{C}_D$. We analyze the problem
  \begin{align}\label{eqn:le}
   \begin{cases}
    \Delta w &=0 \text{ in } B=\{ x\in \mathbb{R}^3 \;|\; |x|<1 \},\\
    w&= f \text{ on } \partial B.
    \end{cases}
  \end{align}
We use spherical coordinates $x=r\sin{\theta}\cos{\phi}, y=r\sin{\theta}\sin{\phi}$ and $z=r\cos{\theta}$, where $0\leq r \leq  1, 0\leq \theta \leq \pi, 0\leq \phi < 2\pi$ and the transformed Laplace equation becomes
\begin{align}\label{eqn:lep}
    \frac{1}{r^2}\frac{\partial}{\partial r}\left(r^2\frac{\partial w}{\partial r}\right)+\frac{1}{r^2\sin{\theta}}\frac{\partial}{\partial\theta}\left(\sin{\theta}\frac{\partial w}{\partial \theta}\right)+\frac{1}{r^2\sin^2{\theta}}\frac{\partial^2 w}{\partial\phi^2}=0
\end{align}
with the respective boundary condition
\begin{align*}
    w(1,\theta,\phi)=f(\theta,\phi), 0 \leq \theta \leq \pi, 0\leq \phi < 2 \pi.
\end{align*}
We use the separation of variables method. We insert
\begin{align*}
    w(r,\theta,\phi)=R(r)\Theta(\theta)\Phi(\phi)
\end{align*}
into the equation \eqref{eqn:lep} to derive
\begin{align*}
    \frac{1}{\Theta \sin{\theta}}\frac{d}{d\theta}\left(\sin{\theta}\frac{d\Theta}{d\theta}\right)+\frac{1}{\Phi\sin^2{\theta}}\frac{d^2\Phi}{d\phi^2}=-\frac{1}{R}\frac{d}{dr}\left(r^2\frac{dR}{dr}\right)=-\lambda.
\end{align*}
We then choose another separation constant $\lambda^{'}$ to get the ordinary differential equations as
\begin{align*}
    \frac{d}{dr}\left(r^2\frac{dR}{dr}\right)=\lambda R, \frac{d^2\Phi}{d\phi^2}+\lambda^{'}\Phi=0,\\
    \sin{\theta}\frac{d}{d\theta}\left(\sin{\theta}\frac{d\Theta}{d\theta}\right)+(\lambda\sin^2{\theta}-\lambda^{'})\Theta=0.
\end{align*}
We are looking for the bounded solution inside the sphere. Therefore, we discard the unbounded case and get the solution for the $R$ and $\Phi$ equations as
\begin{align}
    R(r)&=C_nr^n, \text{ where }\lambda = n(n+1), n\in \mathbb{N}\cup \{0\}, \\
    \Phi(\phi) &= C_m e^{im\phi}, \lambda^{'}=m^2, m\in \mathbb{Z}.
\end{align}
We obtain the equation for $\Theta$ as
\begin{align*}
    \frac{d^2\Theta}{d\theta^2}+\frac{\cos{\theta}}{\sin{\theta}}\frac{d\Theta}{d\theta}+\left(n(n+1)-\frac{m^2}{\sin^2{\theta}}\right)\Theta=0.
\end{align*}
We employ the change of variable $t=\cos{\theta}\in (-1,1)$ and achieve the generalized Legendre equation as
\begin{align}\label{eqn:gle}
    (1-t^2)\frac{d^2 \xi}{dt^2}-2t \frac{d\xi}{dt}+\left( n(n+1)-\frac{m^2}{1-t^2}\right) \xi = 0, \text{ where } \xi(t)=\Theta(\theta).
\end{align}
Therefore, the bounded solution of the equation \eqref{eqn:gle} is represented by the Legendre polynomial of degree $n$ and order $m$ and explicitly given by 
\begin{align*}
    L_n^m (t)= (1-t^2)^\frac{m}{2}\frac{d^m}{dt^m} (L_n(t)), |m|\leq n,
\end{align*}
where $L_n(t)$ is the orthogonal Legendre polynomial over $(-1,1)$ such that
\begin{align*}
    L_n(t)=\frac{1}{2^n n!}\frac{d^l}{dt^n}[(t^2-1)^n], L_n(1)=1 
\end{align*}
and
\begin{align*}
    \int_{-1}^{1} L_n(t)L_k(t)=0 \text{ for } n\neq k.
\end{align*}
Therefore, the solution of the equation \eqref{eqn:lep} is given by
\begin{align}\label{eqn:se}
    w(r,\theta, \phi)=\sum_{n=0}^{\infty}\sum_{m=-n}^{n} a_{nm}r^n L_n^m(\cos{\theta}) e^{im\phi},
\end{align}
where the coefficient $a_{lm}$ has to be determined by the boundary conditions.\\
\textbf{Spherical harmonics:} For $n\in \mathbb{N}\cup \{0\}$ and $|m|\leq n,$ the spherical harmonic function $Y^m_n$ of degree $m$ and order $n$ is defined by
\begin{align}\label{eqn:sh}
    Y^m_n(\theta, \phi)=\sqrt{\frac{2n+1}{4\pi}\frac{(n-m)!}{(n+m)!}}L_n^m(\cos{\theta})e^{im\phi}, \text{ for }(\theta, \phi)\in [0, \pi]\times [0, 2\pi).
\end{align}
Hence, we can rewrite the solution \eqref{eqn:se} in terms of the spherical harmonics as 
\begin{align}\label{eqn:sts}
    w(r,\theta,\phi)=\sum_{n=0}^{\infty}\sum_{m=-n}^{n} a_{nm} r^n Y_n^m(\theta, \phi).
\end{align}
Let the boundary data $f(\theta, \phi)\in L^2(\partial B)$. Now since $\{Y_n^m \;|\; n\in \mathbb{N}\cup \{0\}, |m|\leq n\}$ forms a complete orthonormal basis of the Hilbert space $L^2(\partial B_1(0))$, therefore, $f$ can be expressed as a Fourier-Laplace series
\begin{align}\label{eqn:bdata}
    f(\theta, \phi)=\sum_{n=0}^{\infty}\sum_{m=-n}^{n} a_{nm} Y^m_n(\theta,\phi),
\end{align}
where the coefficients are given by
\begin{align*}
    a_{nm}=\langle f, Y_n^m \rangle_{L^2(\partial B)}=\int_{0}^{\pi}\int_{0}^{2\pi}f(\theta,\phi)\overline{Y_n^m(\theta,\phi)}\sin{\theta}\; d\phi d\theta.
\end{align*}
We can consider the partial sum of the series \eqref{eqn:bdata} as
\begin{align*}
    S_N(\theta, \phi)=\sum_{n=0}^{N}\sum_{m=-n}^{n}a_{nm} Y^m_n(\theta,\phi).
\end{align*}
We have $f\in L^2(\partial B)$, therefore $\lim_{N\to\infty} S_N(\theta, \phi)=f(\theta, \phi)$ in $L^2(\partial B).$ We can also derive that
\begin{align*}
    \lim_{N\to\infty} S_N(\theta, \phi)&=f(\theta, \phi) \text{ for all } (\theta, \phi)\in \partial B\setminus D_f,\\
    \lim_{N\to\infty} S_N(x_i)&=\frac{1}{2}(f_i^{-}+f_i^{+}) \text{ for all } x_i\in D_f.
\end{align*}

Another representation of the solution of the Laplace equation \eqref{eqn:le} is given by the Poisson kernel in $3$D. We can denote any point on the ball $B$ as $x=(r\sin{\theta}\cos{\phi}, r\sin{\theta}\sin{\phi}, r\cos{\theta})=r\eta $, where $\eta=(\sin{\theta}\cos{\phi}, \sin{\theta}\sin{\phi}, \cos{\theta})\in \partial B$. We can rewrite the solution \eqref{eqn:sts} as
\begin{align*}
    w(x):=w(r\eta)&=\sum_{n=0}^{\infty}\sum_{m=-n}^{n} a_{nm}r^n Y_n^m(\eta)\\
    &=\sum_{n=0}^{\infty}\sum_{m=-n}^{n} \langle f, Y_n^m \rangle_{L^2(\partial B)} r^n Y_n^m(\eta)\\
    &=\sum_{n=0}^{\infty}\sum_{m=-n}^{n}\left(\int_{|\xi|=1}f(\xi)\overline{Y_n^m(\xi)}d\xi \right) r^n Y_n^m(\eta)\\
    &=\int_{|\xi|=1} f(\xi) \left(\sum_{n=0}^{\infty} r^n \sum_{m=-n}^{n} \overline{Y_n^m(\xi)}  Y_n^m(\eta)\right) d\xi.
\end{align*}
We have
\begin{align*}
    \sum_{m=-n}^{n} \overline{Y_n^m(\xi)}  Y_n^m(\eta) =\frac{(2n+1)}{4\pi}L_n(\xi\cdot\eta).
\end{align*}
This yields
\begin{align}\label{eqn:cs}
     w(r\eta)&=\frac{1}{4\pi}\int_{|\xi|=1} f(\xi)  \left(\sum_{n=0}^{\infty} r^n (2n+1) L_n(\xi\cdot\eta)\right) d\xi.
\end{align}
We can establish that $\lim_{r\uparrow 1}w(r\eta)=f(\eta),$ see \cite{hackbusch2017elliptic} for a detailed explanation. Consequently, we can see that $w(r\eta)$ solves the Laplace problem given by \eqref{eqn:le}.

Now the Legendre polynomials are generated by the function $g(x,t)=\frac{1}{\sqrt{1-2xt+t^2}}$. We can expand the function as a Taylor series in $t$ and derive
\begin{align*}
    \sum_{n=0}^{\infty} L_n(t) r^n=\frac{1}{\sqrt{1-2rt+r^2}}=G(r,t), \text{ where }|r|<1.
\end{align*}
We can differentiate both sides with respect to $r$ and compute
\begin{align*}
    G+2r\frac{\partial G}{\partial r}=\frac{1-r^2}{(1-2rt+r^2)^{\frac{3}{2}}}.
\end{align*}
We can simplify the left-hand side identity and obtain
\begin{align*}
    \sum_{n=0}^{\infty}(2n+1)L_n(t)r^n=\frac{1-r^2}{(1-2rt+t^2)^{\frac{3}{2}}}.
\end{align*}
Hence, we get
\begin{align*}
     w(r\eta)&=\frac{1}{4\pi}\int_{|\xi|=1} \frac{1-r^2}{(1-2r(\xi.\eta)+r^2)^{\frac{3}{2}}}f(\xi) d\xi.
\end{align*}
Hence, the Poisson kernel in $3D$ is
\begin{align}\label{eqn:pk}
    K(r\eta,\xi)=\frac{1-r^2}{(1-2r(\xi\cdot\eta)+r^2)^\frac{3}{2}}=\sum_{n=0}^{\infty}(2n+1)r^n L_n(\xi\cdot\eta).
\end{align}
We can write the expression in terms of $r, \theta, \phi$ as 
\begin{align*}
    K(r,\theta,\phi)=\frac{1-r^2}{(1-2r(\sin{\theta}\sin{\theta^{'}}\cos({\phi-\phi^{'}})+\cos{\theta}\cos{\theta^{'}})+r^2)^{\frac{3}{2}}},
\end{align*}
where $\eta=(\sin{\theta}\cos{\phi}, \sin{\theta}\sin{\phi}, \cos{\theta})$ and $\xi=(\sin{\theta^{'}}\cos{\phi^{'}}, \sin{\theta^{'}}\sin{\phi^{'}}, \cos{\theta^{'}}) \in \partial B.$

\section{Idea of maximum principle}\label{sec:3}
We know from the maximum principle
\begin{align}\label{eqn:mp}
    \min_{\xi\in\partial B} f \leq w(r\eta, \xi)\leq \max_{\xi\in\partial B} f \text{ for all } r\eta\in B.
\end{align}
The Poisson kernel satisfies the relation
\begin{align}
    K(r\eta,\xi) &\geq 0, \label{eqn:pe} \\
    \frac{1}{4\pi}\int_{|\xi|=1}K(r\eta, \xi)d\xi &=1 \text{ for all }0\leq r<1.\label{eqn:ave}
\end{align}
Weaker version of the maximum principle (Comparison principle): Consider the Laplace problem \eqref{eqn:le} with piecewise continuous boundary data $f$ and $\Tilde{f}$. Let the corresponding solutions be $w$ and $\Tilde{w}$. Then we have
\begin{align*}
    f\leq \Tilde{f} \text{ on }\partial B\setminus D_f\implies w\leq \Tilde{w} \text{ on }B.
\end{align*}

\section{Classical Schwarz method for overlapping balls}\label{sec:4}
We consider the Laplace problem 
\begin{align}\label{eqn:led}
    \Delta w&=0 \text{ in }\Omega=B_1\cup B_2,\\
    w&=f \text{ on }\partial\Omega,\nonumber
\end{align}
in a domain $\Omega$ which is made of two intersecting balls. Without loss of generality, we take one ball with center at $(0,0,0)$ of radius $1$, denoted by $B_1=B_1(0,0,0)$ and another ball of radius $R$ and center at $(m,0,0)$ with $m>0$, i.e. $B_2=B_R(m,0,0).$ We characterize any point on the boundary of the domain by the angles $(\theta, \phi)\in [0, \pi]\times[0,2\pi).$ We specify the boundary of each balls as $\partial B_i=\Gamma_i\cup (\partial B_i\setminus \Gamma_i)$, where $\Gamma_i=\partial B_i\cap B_j, j\neq i$ is the interior part of the boundary of the ball $B_i.$ Further, we assume that the Dirichlet boundary data $f$ is continuous on $\partial B$ and the restriction on the boundary of each ball is defined by $f_i=f|_{\partial B_i\setminus \Gamma_i}.$ We consider the additive variant of the Schwarz method to approximately solve the Laplace problem. We start with the initial approximation $w_i^{(0)}\in C(\Bar{B}_i)$ such that $w_i^{(0)}=f_i$ on $\partial B_i\setminus \Gamma_i, i=1,2.$ Then the iterative schemes are given by:\\
\begin{minipage}{0.5\textwidth}
\begin{align}\label{eqn:exsch}
    \begin{cases}
        \Delta w_1^{(k)}&= 0 \text{ in }B_1,\\
          w_1^{(k)}&=f_1 \text{ on }\partial B_1\setminus \Gamma_1,\\
          w_1^{(k)}&= w_2^{(k-1)} \text{ on }\Gamma_1,   
\end{cases}
\end{align}
\end{minipage}
\begin{minipage}{0.5\textwidth}
\begin{align}
    \begin{cases}
        \Delta w_2^{(k)}&= 0 \text{ in }B_2,\\
          w_2^{(k)}&=f_2 \text{ on }\partial B_2\setminus \Gamma_2,\\
          w_2^{(k)}&= w_1^{(k-1)} \text{ on }\Gamma_2.   
\end{cases}
\end{align}
\end{minipage}
The convergence of the above scheme is determined by $w_1^{(k)}|_{\Gamma_2}$ and $w_2^{(k)}|_{\Gamma_1}.$ We need a few notations to explain the iterative scheme. We denote the solution of the Laplace problem on the ball $B_i$ by $\Delta_i^{-1}(\Tilde{w},w), i=1,2$ for the corresponding piecewise boundary data $(\Tilde{w},w)\in C(\overline{\partial B_i\setminus\Gamma_i})\times C(\overline{\Gamma}_i).$ Moreover, we can restrict the solution $\Delta_1^{-1}(\Tilde{w},w)$ on the interior boundary $\Gamma_2$ and extend this function continuously to $\overline{\Gamma}_2.$ The operator $B_1\rightarrow \overline{\Gamma}_2$ is given by $R_{\overline{\Gamma}_2}$. Similarly, we define the operator $R_{\overline{\Gamma}_1}: B_2 \rightarrow \overline{\Gamma}_1$. Therefore, the sequences $(w_1^{(k)})_{k\geq1}$ and $(w_2^{(k)})_{k\geq1}$ can be written as
\begin{align}\label{eqn:ive}
w_1^{(k)}|_{\overline{\Gamma}_2}=R_{\overline{\Gamma}_2}\Delta_1^{-1}\left(f_1,w_2^{(k-1)}|_{\Gamma_2}\right), w_2^{(k)}|_{\overline{\Gamma}_1}=R_{\overline{\Gamma}_1}\Delta_2^{-1}\left(f_2,w_1^{(k-1)}|_{\Gamma_2}\right).
\end{align}
In addition, we introduce Dirichlet to Dirichlet map $\Lambda_1: C(\overline{\Gamma}_1)\rightarrow C(\overline{\Gamma}_2)$ by $\Lambda_1 w=R_{\overline{\Gamma}_2}\Delta_1^{-1}(0,w)$. In the same way, we define another Dirichlet to Dirichlet map $\Lambda_2: C(\overline{\Gamma}_2)\rightarrow C(\overline{\Gamma}_1)$ by $\Lambda_2 w=R_{\overline{\Gamma}_1}\Delta_2^{-1}(0,w)$. Hence, we can derive
\begin{align}\label{eqn:ime}
    \begin{pmatrix}
w_1^{(k)}|_{\overline{\Gamma}_2}\\
w_2^{(k)}|_{\overline{\Gamma}_1}
\end{pmatrix}=\begin{pmatrix}
    0 & \Lambda_1\\
    \Lambda_2 &0
\end{pmatrix}\begin{pmatrix}
w_1^{(k-1)}|_{\overline{\Gamma}_2}\\
w_2^{(k-1)}|_{\overline{\Gamma}_1}
\end{pmatrix}+\begin{pmatrix}
    R_{\overline{\Gamma}_2}\Delta_1^{-1}(f_1, 0)\\
    R_{\overline{\Gamma}_1}\Delta_2^{-1}(f_2, 0)
\end{pmatrix}.
\end{align}

\section{Schwarz-spherical domain decomposition method}\label{sec:5}
In this section, we introduce an inexact version of the classical Schwarz method. We consider the approximate boundary data on $\partial B_i, i=1,2$ given by the finite sum of the spherical harmonics as expressed in \eqref{eqn:bdata}. Therefore, the respective space for the inexact boundary data is denoted by
\begin{align}\label{eqn:bsp}
    \mathcal{S}_N^1={\textrm span}\{Y_n^m\; | \; 0\leq n \leq N, -n \leq m \leq n\}\subset L^2(\partial B_1).
\end{align}
We can parametrize the the ball $B_2$ as $(r,\xi)\rightarrow (m,0,0)+r\xi,$ where $0\leq r <R$ and $|\xi|=1.$ The corresponding solution space for the boundary data of the ball $B_2$ is $\mathcal{S}_N^2.$ We use the notation $P^i_N : L^2(\partial B_i)\rightarrow \mathcal{S}_N^i$ to describe the $L^2$-orthogonal projection of the Dirichlet boundary data. We have an option to choose different discretization parameters $N$ for different balls, and we can also vary $N$ for each iteration. However, in this work, we choose the same discretization parameter $N$ for two balls and fix $N$ for every iteration. Therefore, the inexact variant of the classical Schwarz iterations is given by: choose the initial approximation $w_i^{(0)}\in C(\overline{B_i})$ such that $w_i^{(0)}|_{\partial\Omega_i\setminus \Gamma_i}=f_i, i=1,2$ and have to determine for $k\geq 1$
\begin{minipage}{0.5\textwidth}
  \begin{align}\label{eqn:inexsch}
  \begin{cases}
       \Delta w_1^{(k)} &=0 \text{ in } B_1,\\
    w_1^{(k)} & = P^1_N(f_1, w_2^{(k-1)}) \text{ on }\partial B_1.
  \end{cases}
\end{align}  
\end{minipage}
\begin{minipage}{0.5\textwidth}
    \begin{align}
    \begin{cases}
        \Delta w_2^{(k)} &=0 \text{ in } B_2,\\
    w_2^{(k)}&=P^2_N(f_2, w_1^{(k-1)}) \text{ on }\partial B_2.
    \end{cases}
\end{align}
\end{minipage}
We can derive the solution in a ball once we can determine $P_N^1:={\textrm span}\{Y_n^m(\xi)\; | \; 0\leq n \leq N, -n \leq m \leq n, |\xi|=1\}$ as the solution in the ball $B_1$ 
\begin{align*}
    w_1^{(k)}=\sum_{n=0}^{N}\sum_{m=-n}^{n} a_{nm} r^n Y_n^m(\xi),
\end{align*}
cf. \eqref{eqn:se}. We can apply the same argument for $P_N^2$ and $w_2^{(k)}.$ We have to be careful about the fact that the Laplace problem \eqref{eqn:led} is discretized here. Since we approximate the solution space by a finite-dimensional space $\mathcal{S}_N^i, i=1,2.$ Therefore, other than the rate of convergence of the iterations, we have another crucial issue to address, that is the accuracy of the discrete solution. We can write the total error in $w_1^{(k)}$ as $e_1^{(k)}=(w|_{B_1}-w_1^{(k)})=(w|_{B_1}-w_1^{(\infty)})+(w_1^{(\infty)}-w_1^{(k)}):=e_{1,discr}+e_{1,iter},$ where we assume $w_1^{(\infty)}=\lim_{k\to \infty} w_1^{(k)}$. In this paper, we only study the iteration error $e_{1,iter}.$ 

We formulate \eqref{eqn:inexsch} analogous to $\eqref{eqn:ive}-\eqref{eqn:ime}$ for the convergence analysis of the Schwarz-spherical domain decomposition method. We consider the Dirichlet to Dirichlet mappings
\begin{align}
    \Lambda_{1,N}w:=R_{\Bar{\Gamma_2}}\Delta_1^{-1}P_N^1(0,w), \; \Lambda_{2,N}w:=R_{\Bar{\Gamma_1}}\Delta_2^{-1}P_N^2(0,w).
\end{align}
Therefore, the Schwarz-spherical method is given by
\begin{align}
     \begin{pmatrix}
w_1^{(k)}|_{\overline{\Gamma}_2}\\
w_2^{(k)}|_{\overline{\Gamma}_1}
\end{pmatrix}=\begin{pmatrix}
    0 & \Lambda_{1,N}\\
    \Lambda_{2,N} &0
\end{pmatrix}\begin{pmatrix}
w_1^{(k-1)}|_{\overline{\Gamma}_2}\\
w_2^{(k-1)}|_{\overline{\Gamma}_1}
\end{pmatrix}+\begin{pmatrix}
    R_{\overline{\Gamma}_2}\Delta_1^{-1}P_N^1(f_1, 0)\\
    R_{\overline{\Gamma}_1}\Delta_2^{-1}P_N^2(f_2, 0)
\end{pmatrix}.
\end{align}
Convergence properties of the Schwarz-spherical domain decomposition method is determined by the operator $\Lambda_N:=\begin{pmatrix}
    0 & \Lambda_{1,N}\\
    \Lambda_{2,N} &0
\end{pmatrix}$.

\subsection{A variant of the maximum principle}
We derive a different version of the maximum principle for the operator $\Delta_1^{-1}P_N^1$ here. We denote the space of piece-wise continuous functions on $\partial B_1$ by $\mathcal{C}_D \subset L^2(\partial B_1)$. We consider the space $\mathcal{C}_D$ as the domain of the projection operator to avoid technical details. We know that for the boundary data $f\in\mathcal{C}_D:$ (see section \ref{sec:2} for details)
\begin{align*}
    w=\Delta_1^{-1}P_N^1 f \text{ iff } w(x):=w(r\eta)=\frac{1}{4\pi}\int_{|\xi|=1}K(r\eta, \xi) P_N^1 (\xi)\; d\xi \text{ for all } x \in B_1.
\end{align*}
We can shift the projection operator and derive the following lemma. The explicit form of the exact Poisson kernel is given in \eqref{eqn:pk}. 
\begin{lemma}\label{lemma:fmv}
    For the boundary data $f\in \mathcal{C}_D,$ we have
    \begin{align}
        w=\Delta^{-1}P^1_N f \text{ iff } w(x):= w(r\eta)=\frac{1}{4\pi}\int_{|\xi|=1}K_N(r\eta, \xi) f(\xi)\; d\xi, \text{ for all  } x \in B_1
    \end{align}
    with the truncated kernel $K_N(r\eta, \xi)=\sum_{n=0}^{N}r^n (2n+1)L_n(\eta\cdot\xi).$
\end{lemma}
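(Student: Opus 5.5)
The plan is to compute $\Delta^{-1}P_N^1 f$ directly in the spherical-harmonic basis and then recognize the result as an integral against $K_N$. Since $f\in\mathcal{C}_D\subset L^2(\partial B_1)$, the projection has the representation
\begin{align*}
P_N^1 f=\sum_{n=0}^{N}\sum_{m=-n}^{n}\langle f,Y_n^m\rangle_{L^2(\partial B_1)}\,Y_n^m .
\end{align*}
This is a finite sum of spherical harmonics, so the harmonic extension into $B_1$ is obtained termwise, exactly as in \eqref{eqn:sts}:
\begin{align*}
w(r\eta)=\sum_{n=0}^{N}\sum_{m=-n}^{n}\langle f,Y_n^m\rangle r^nY_n^m(\eta).
\end{align*}
It is harmonic in $B_1$ because each $r^nY_n^m$ is a solid harmonic. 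It is continuous up to $\partial B_1$ and equals $P_N^1 f$ there. By uniqueness of the Dirichlet problem, this $w$ is $\Delta^{-1}P_N^1 f$.

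Next I write the coefficients as integrals, $\langle f,Y_n^m\rangle=\int_{|\xi|=1}f(\xi)\overline{Y_n^m(\xi)}\,d\xi$. The sum is finite, so interchanging it with the integral needs no convergence argument. This gives
\begin{align*}
w(r\eta)=\int_{|\xi|=1}f(\xi)\sum_{n=0}^{N}r^n\sum_{m=-n}^{n}\overline{Y_n^m(\xi)}Y_n^m(\eta)\,d\xi .
\end{align*}
Applying the addition theorem $\sum_{m=-n}^{n}\overline{Y_n^m(\xi)}Y_n^m(\eta)=\frac{2n+1}{4\pi}L_n(\xi\cdot\eta)$, already used in Section~\ref{sec:2}, turns this into
\begin{align*}
w(r\eta)=\frac{1}{4\pi}\int_{|\xi|=1}K_N(r\eta,\xi)f(\xi)\,d\xi .
\end{align*}
This proves the forward direction of the lemma.

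For the converse, observe that both sides are uniquely determined by $f$. If $w$ is given by the $K_N$-integral, the same chain of identities read backwards shows that $w$ equals the harmonic extension of $P_N^1 f$, which is $\Delta^{-1}P_N^1 f$.

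An alternative route starts from the Poisson representation with data $P_N^1 f$ and uses self-adjointness of $P_N^1$ in $L^2(\partial B_1)$. Fix $r\eta$; then $K(r\eta,\cdot)$ belongs to $L^2$ for $r<1$, and
\begin{align*}
\int K(r\eta,\xi)\,P_N^1f(\xi)\,d\xi=\int P_N^1\bigl[K(r\eta,\cdot)\bigr](\xi)\,f(\xi)\,d\xi .
\end{align*}
The coefficients in \eqref{eqn:pk} show that $P_N^1 K(r\eta,\cdot)=K_N(r\eta,\cdot)$, since the expansion is a sum of zonal harmonics of increasing degree and projection simply truncates it at degree $N$. The kernels are real, so no complex conjugation issue arises.

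No step here is a serious obstacle. The only points needing care are two. First, the identification of $\Delta^{-1}P_N^1 f$ with the termwise extension relies on uniqueness of the harmonic extension of the smooth data $P_N^1 f$. Second, in the alternative route, one must justify that the $L^2$ expansion \eqref{eqn:pk} converges in $L^2(\partial B_1)$ for fixed $r<1$. This holds because $|L_n|\le1$ and $\sum(2n+1)r^n<\infty$, so the series even converges uniformly.
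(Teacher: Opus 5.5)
Your proof is correct. Your main route differs from the paper's, and your ``alternative route'' is essentially the paper's proof.

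The paper starts from the Poisson-integral representation of the harmonic extension of $P_N^1 f$. It moves $P_N^1$ onto the kernel by self-adjointness. It then identifies $P_N^1K(r\eta,\cdot)=K_N(r\eta,\cdot)$, using uniform convergence of the Legendre expansion \eqref{eqn:pk} for $r<1$.

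Your primary argument never uses the Poisson kernel. You extend the finite sum $P_N^1 f$ termwise by the solid harmonics $r^nY_n^m$, invoke uniqueness for the Dirichlet problem, and apply the addition theorem to a finite sum. So no convergence or interchange question arises.

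\begin{itemize}
\item Your direct route is more elementary and self-contained: it needs only uniqueness and the addition theorem.
\item The paper's route yields the structural identity $K_N=P_N^1K$. The paper uses this afterwards, e.g.\ to get the unit mean \eqref{eqn:atk} from $P_N^1 1=1$, and in the bound for $\Lambda_{1,N}$.
\end{itemize}

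You derive that identity as well in your second route, so you have both. The unit mean also follows directly from your main route by taking $f\equiv 1$.

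Your handling of the ``iff'' is fine: both sides are uniquely determined by $f$ and coincide. Your justification of uniform convergence via $|L_n|\le 1$ on $[-1,1]$ is the correct one. The paper's proof writes $|\eta\cdot\xi|=1$ where $|\eta\cdot\xi|\le 1$ is meant.
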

\begin{proof}
    Let $x=r\eta\in B_1$, where $0\leq r <1$ and $\eta\in \partial B_1$. Now the series $\sum_{n=0}^{\infty} (2n+1)r^nL_n(\eta\cdot\xi)$ converges uniformly to $\frac{1-r^2}{(1-2r(\eta.\xi)+r^2)^{\frac{3}{2}}}$ for $r<1$ as $|\eta\cdot\xi|=1.$ Therefore the $L^2$-orthogonal projection of the kernel on the finite dimensional space $\mathcal{S}_N^1$ (cf. \eqref{eqn:bsp}) is represented by $K_N(r\eta, \xi):=P_N^1K(r\eta, \xi)=\sum_{n=0}^{N} (2n+1)r^nL_n(\eta\cdot\xi).$ Consequently, we get
    \begin{align*}
       w(x)&=\frac{1}{4\pi}\int_{|\xi|=1}K(r\eta, \xi) P_N^1 f(\xi)\; d\xi=\frac{1}{4\pi}\int_{|\xi|=1}P_N^1 K(r\eta, \xi) f(\xi)\; d\xi\\
       &=\frac{1}{4\pi}\int_{|\xi|=1}K_N(r\eta, \xi)f(\xi)\; d\xi. 
    \end{align*}
\end{proof}
Now a necessary condition for the maximum principle \eqref{eqn:mp} is the positivity of the kernel $K(r\eta, \xi)\geq 0,$ for all $x=r\eta\in B_1.$ However, we do not expect the same  sign property from the truncated kernel $K_N$. Since the Legendre polynomials oscillate and the oscillation increases as $n$ grows. Therefore, when $r \approx 1$ the damping $r^n$ is weak and the high oscillation is not suppressed. As a result, the kernel $K_N$ changes sign as an instance of the Gibbs Phenomena. We know $x=r\eta\in B_1$, where $0\leq r<1$ and $\eta=(\sin{\theta}\cos{\phi}, \sin{\theta}\sin{\phi}, \cos{\theta})\in \partial B_1.$ We calculate $\eta\cdot\xi:=\cos{\psi},$ such that $0\leq \psi \leq \pi$, this yields 
\begin{align}\label{eqn:eqtk}
     \tilde{K}_N(r, \psi):=\sum_{n=0}^{N}r^n(2n+1)L_n(\cos{\psi})=K_N(r\eta, \xi), 0\leq \psi \leq \pi.
\end{align}
We illustrate $\Tilde{K}_N(r, \psi)$ for $N=2,10,30,100$ in the Figure \ref{fig:1} and Figure \ref{fig:2}.
\begin{figure}[h!]
    \includegraphics[width=0.5\linewidth]{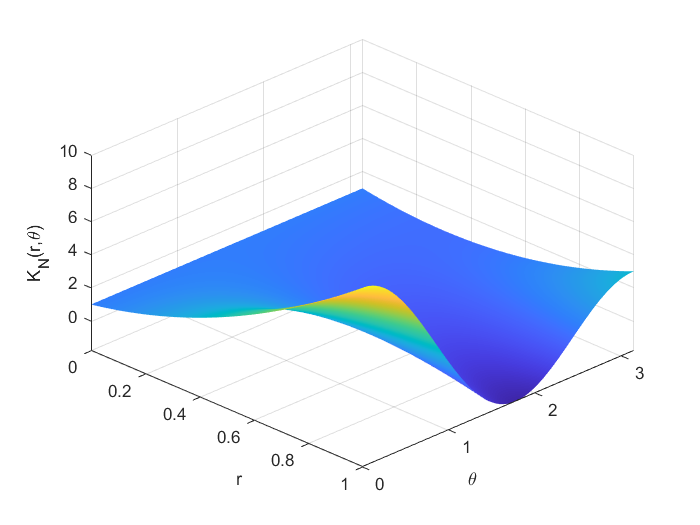}
    \includegraphics[width=0.5\linewidth]{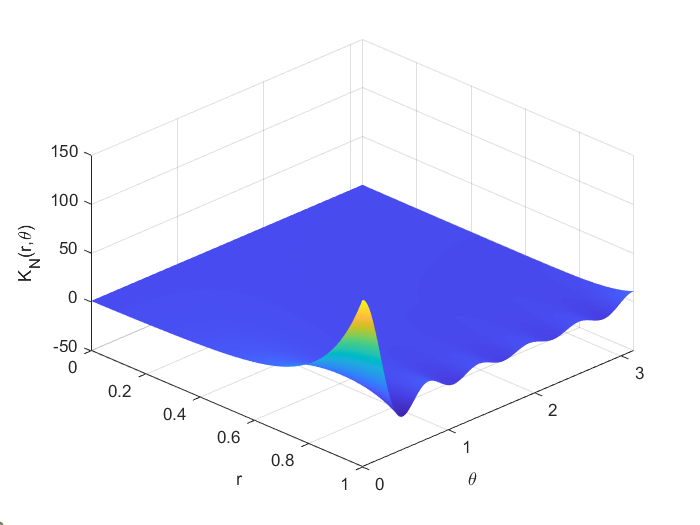}
    \caption{Projected Kernel $K_N(x,(0,0,1))$ for $N=2$(left) and $N=10$ (right) }
    \label{fig:1}
\end{figure}

\begin{figure}[h!]
    \includegraphics[width=0.5\linewidth]{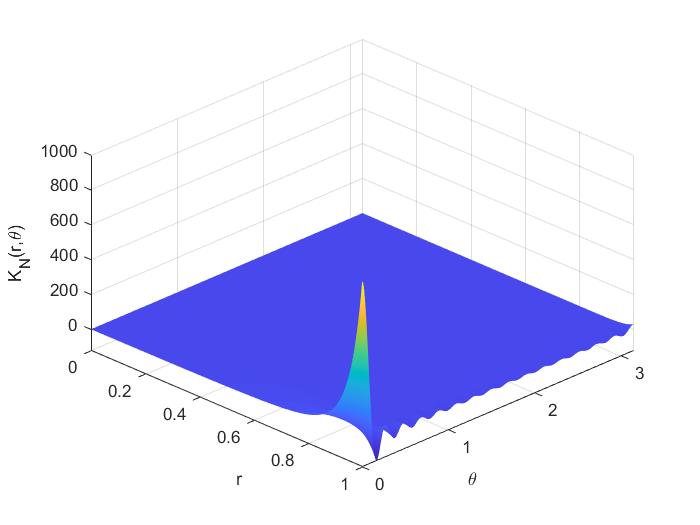}
    \includegraphics[width=0.5\linewidth]{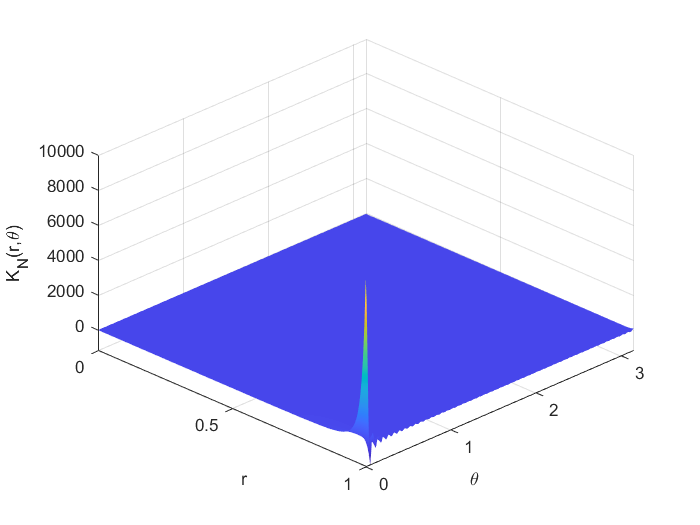}
    \caption{Projected Kernel $K_N$ for $N=30$(left) and $N=100$ (right)}
    \label{fig:2}
\end{figure}

\begin{thm}\label{thm:main}
The projected Poisson kernel satisfies: for all $N\geq 4$ odd, 
\begin{align}\label{eqn:mrs}
\begin{split}
     \tilde{K}_N(r, \psi)&=\sum_{n=0}^{N}r^n(2n+1)L_n(\cos{\psi}) \geq 0 \text{ for all } \psi\in [0,\pi] \text{ with }r\leq r_N,\\
    \text{ with } r_N&=\left(1-\frac{2\ln{(2(N+1))}}{N+1}\right).
\end{split}
\end{align}
\end{thm}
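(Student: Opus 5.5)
The plan is to reduce the positivity of $\tilde K_N(r,\cdot)$ on $[0,\pi]$ to its value at the antipodal angle $\psi=\pi$. There $L_n(-1)=(-1)^n$, and the sum collapses to an alternating series with a closed form.

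\textbf{Step 1: a kernel representation.} First I will write $\tilde K_N$ via a Christoffel--Darboux-type identity. Put $t=\cos\psi$. From $(2n+1)L_n=L_{n+1}'-L_{n-1}'$, or equivalently from the operator identity
\[
\sum_{n\le N}(2n+1)r^nL_n=\Bigl(1+2r\tfrac{\partial}{\partial r}\Bigr)\sum_{n\le N}r^nL_n ,
\]
the truncated kernel is expressed through the truncated generating function $G_N(r,t)=\sum_{n\le N}r^nL_n(t)$. Using the generating function $G(r,t)=(1-2rt+r^2)^{-1/2}$ from Section~\ref{sec:2}, I will split
\[
\tilde K_N = K - \sum_{n>N}(2n+1)r^nL_n(t).
\]
The exact kernel satisfies $K(r\eta,\xi)\ge (1-r^2)/(1+r)^3=(1-r)/(1+r)^2$, and this minimum is attained at $t=-1$.

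\textbf{Step 2: the tail bound.} Since $|L_n|\le1$, the tail is bounded uniformly in $t$ by
\[
T_N(r):=\sum_{n>N}(2n+1)r^n=r^{N+1}\Bigl(\frac{2N+3}{1-r}+\frac{2r}{(1-r)^2}\Bigr).
\]
It then suffices to have $(1-r)/(1+r)^2\ge T_N(r)$. This crude route may lose constants, so I keep a sharper fallback. Decompose $\tilde K_N(r,t)-\tilde K_N(r,-1)$ and show it is nonnegative for $r\le r_N$, using $|L_n(t)|\le 1$ and the sign structure. In other words, show that the minimum over $t$ of $\tilde K_N(r,t)$ is attained at $t=-1$, up to an error controlled by the same tail. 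The paper's own outline suggests exactly this reduction to $\xi=-\eta$.

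\textbf{Step 3: the antipodal value.} At $t=-1$ the sum is $\sum_{n=0}^N(2n+1)(-r)^n$. For odd $N$ it has the closed form
\[
\tilde K_N(r,\pi)=\frac{1-r}{(1+r)^2}-r^{N+1}\Bigl(\frac{2N+3}{1+r}-\frac{2r}{(1+r)^2}\Bigr),
\]
which follows from differentiating the geometric series; oddness of $N$ fixes the sign of the remainder. Nonnegativity becomes
\[
1-r\ \ge\ r^{N+1}\bigl((2N+3)(1+r)-2r\bigr)\approx 2(N+1)\,r^{N+1}\cdot 2 .
\]
I will set $r=1-s$ and use $r^{N+1}\le e^{-(N+1)s}$. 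The condition then reads roughly $s\,e^{(N+1)s}\ge 4(N+1)$. Its root is given by the Lambert $W$ function as $s^*=W(4(N+1)^2)/(N+1)$. Applying the Hoorfar--Hassani bound $W(x)\le \ln x-\ln\ln x+\dots$ gives $s^*\le 2\ln(2(N+1))/(N+1)$ for $N\ge5$. Monotonicity in $s$ then shows that the inequality holds for all $r\le r_N$.

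\textbf{Main obstacle.} The hard part is justifying the reduction to $\psi=\pi$: I have to control $\tilde K_N(r,t)$ for $t\in(-1,1)$, where Legendre oscillations matter. My first attempt will be the uniform tail bound from Step 2, which gives $\tilde K_N\ge K(\cdot,-1)-T_N$. Note that $T_N\approx 2(N+1)r^{N+1}/(1-r)$, compared with the antipodal remainder $\approx (N+1)r^{N+1}$. This costs an extra factor $1/(1-r)$, i.e.\ a $\ln N$-sized shift. To check whether that loss still fits inside $r_N=1-2\ln(2(N+1))/(N+1)$, I will redo the Lambert-$W$ estimate with $s^2e^{(N+1)s}\ge 4(N+1)$, whose root is about $2\ln(N+1)/(N+1)$. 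If it does not fit, I will instead use the Christoffel--Darboux form. Writing $\tilde K_N$ as
\[
\frac{(N+1)\,r^{N+1}\bigl(L_{N+1}-L_N\cdot(\text{stuff})\bigr)}{\ldots}
\]
plus a positive part, together with the Bernstein bound $|L_n(\cos\psi)|\le \sqrt{2/(\pi n\sin\psi)}$ away from the poles, should show that the negative contributions are largest near $\psi=\pi$.
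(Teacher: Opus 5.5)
\textbf{Verdict: the proposal has a genuine gap, namely the reduction to the antipodal point.}

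Your Step 3 is exactly the paper's final step. It consists of the closed form of $\tilde K_N(r,\pi)=\sum_{n=0}^N(2n+1)(-r)^n$ for odd $N$, the bound of the numerator by $1-r-4(N+1)r^{N+1}$, and the Lambert-$W$ estimate $W(4(N+1)^2)\le 2\ln(2(N+1))$. That part is fine.

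The gap is the step you flag yourself: nothing in the proposal controls $\tilde K_N(r,\psi)$ for $\psi<\pi$ all the way up to $r=r_N$. Your concrete tool, $\tilde K_N\ge \frac{1-r}{(1+r)^2}-T_N(r)$, does not reach $r_N$.
\begin{itemize}
\item Since $(N+1)(1-r_N)=2\ln(2(N+1))$, one has $r_N^{N+1}\approx \frac{1}{4(N+1)^2}$. Hence $\frac{1-r_N}{(1+r_N)^2}\approx\frac{\ln(2(N+1))}{2(N+1)}$, while $T_N(r_N)\approx \frac{2N+3}{1-r_N}\,r_N^{N+1}\approx \frac{1}{4\ln(2(N+1))}$.
\item Asymptotically your sufficient condition is therefore $2\ln^2(2(N+1))\gtrsim N+1$, which is false for large $N$. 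A direct evaluation shows it already fails at $N=105$: there $\frac{1-r_N}{(1+r_N)^2}\approx 0.0280$ while $T_N(r_N)\approx 0.0284$.
\item Your feasibility check mis-solves the relevant equation. With $u=(N+1)s$, the equation $s^2e^{(N+1)s}=c(N+1)$ becomes $u^2e^u=c(N+1)^3$. So $u\approx 3\ln(N+1)-2\ln\ln(N+1)$ and $s^*\sim 3\ln(N+1)/(N+1)$, not $2\ln(N+1)/(N+1)$.
\item Put differently, in the exponent $(N+1)(1-r)$ the extra factor $1/(1-r)$ costs an additive $\ln N-O(\ln\ln N)$. But $r_N$ exceeds the antipodal threshold $W(4(N+1)^2)$ only by $\ln W(4(N+1)^2)=O(\ln\ln N)$.
\end{itemize}
So Step 2 proves positivity only for $r\le 1-(3+o(1))\ln N/N$.

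The fallback is not yet an argument. The displayed Christoffel--Darboux expression contains placeholders. Bernstein's bound $|L_n(\cos\psi)|\le\sqrt{2/(\pi n\sin\psi)}$ blows up as $\psi\to\pi$, which is exactly where $K$ is smallest and the comparison is tight.

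What is missing is a bound on the tail $\sum_{n>N}(2n+1)r^nL_n(t)$ that exploits cancellation, since near the antipode the terms alternate in sign. Bounding term by term with $|L_n|\le 1$ throws this away. One possible route:
\begin{itemize}
\item Sum by parts against $Q_n(t)=\sum_{k\le n}(2k+1)L_k(t)=(n+1)\frac{L_{n+1}(t)-L_n(t)}{t-1}$. This writes the tail as $-r^{N+1}Q_N(t)+(1-r)\sum_{n>N}r^nQ_n(t)$.
\item Using $|Q_n(t)|\le \frac{2(n+1)}{1-t}$, the tail is bounded by $\frac{2r^{N+1}}{1-t}\bigl(2N+3+\frac{r}{1-r}\bigr)$. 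Here $1/(1-r)$ no longer multiplies the leading term.
\item Near $t=1$ one would instead use $|Q_n|\le (n+1)^2$ together with the size of $K$ there.
\end{itemize}

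The paper takes a different route for this reduction: a sign check of $\partial_\psi^2\tilde K_N$ at $\psi=0$ and $\psi=\pi$. That only identifies local extrema, so it cannot simply be imported to close your gap. The global minimization over $\psi$ is the part that genuinely needs a proof.
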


\begin{proof}
We have to find $r_N=\arg \max\{ r \,|\, \Tilde{K}_N(r, \psi)\geq 0, \forall \psi\in [0, \pi] \}$. From numerical experiments, we have observed that $r_N$ coincides with
\begin{align}
    r_N^*=\arg\max\{r \; | \; S_N(r,\psi)=\sum_{n=0}^{N}(-1)^n r^n(2n+1)\sin^{2n}{\left(\frac{\psi}{2}\right)}\geq 0, \forall \; \psi\in [0,\pi]\},
\end{align}
 for $N$ odd.
Let $x=r\sin^{2}{\frac{\psi}{2}}\in [0, 1)$. We compute the finite sum for odd $N$ and obtain
\begin{align*}
    S_N(x)&=\sum_{n=0}^{N}(-1)^n(2n+1)x^n\\
    &=\frac{1-x-(2N+3)x^{N+1}-(2N+1)x^{N+2}}{(1+x)^2}.
\end{align*}
We insert the value of $x$ and obtain
\begin{align*}
    S_N(r,\psi)=\frac{1-r\sin^2{\left(\frac{\psi}{2}\right)}-(2N+3)r^{N+1}\sin^{2N+2}{\left(\frac{\psi}{2}\right)}-(2N+1)r^{N+2}\sin^{2N+4}{\left(\frac{\psi}{2}\right)}}{\left(1+r\sin^2{\left(\frac{\psi}{2}\right)}\right)^2}.
\end{align*}
We differentiate $S_N(r,\psi)$ partially with respect to $\psi$ and arrive at
\begin{align*}
    \frac{\partial S_N}{\partial \psi}&=\frac{\cos{\left(\frac{\psi}{2}\right)}}{\left(1+r\sin^2{\left(\frac{\psi}{2}\right)}\right)^3}\left[-3r\sin{\left(\frac{\psi}{2}\right)}+r^2\sin^3{\left(\frac{\psi}{2}\right)}-(2N^2+3N+1)r^{N+1}\sin^{2N+1}{\left(\frac{\psi}{2}\right)}\right.\\
    &\left.-(4N^2+4N-3)r^{N+2}\sin^{2N+3}{\left(\frac{\psi}{2}\right)}-(2N^2+N)r^{N+3}\sin^{2N+5}{\left(\frac{\psi}{2}\right)}\right]\\
    &\leq 0.
\end{align*}
Therefore, the function $S_N(r, \psi)$ is decreasing with respect to $\psi.$ Hence, the global minimum value of $S_N$ with respect to $\psi$ will be attained at the endpoint. So, we can write
\begin{align*}
    \min_{\psi\in [0, \pi]} S_N(r,\psi)=S_N(r,\pi)=\sum_{n=0}^{N}(-1)^n(2n+1)r^n.
\end{align*}
Next, we would like to find the global minimum value of $\tilde{K}_N(r,\psi)$ with respect to $\psi$ for $\psi\in [0,\pi]$. We calculate
\begin{align*}
   \frac{\partial \tilde{K}_N}{\partial \psi}&=-\sum_{n=0}^{N}r^n(2n+1)(\sin{\psi})L'_n(\cos{\psi}),\\
   \frac{\partial^2 \tilde{K}_N}{\partial \psi^2}&=-\sum_{n=0}^{N}r^n(2n+1)[\cos{\psi}L'_n(\cos{\psi})-\sin^2{\psi}L''_n(\cos{\psi})].
\end{align*}
Now, $\partial_\psi \tilde{K}_N(r,\psi)=0$ gives $\psi=0, \pi $ or $L'_n(\cos{\psi})=0.$ 
Computing the second derivatives at the point $\psi=0$ gives $\restr{\frac{\partial^2 \tilde{K}_N}{\partial \psi^2}}{\psi=0}=-\sum_{n=0}^{N}r^n(2n+1)L'_n(1)=-\sum_{n=0}^{N}r^n(2n+1)\frac{n(n+1)}{2}\leq 0$. Therefore, $\tilde{K}_N(r,\psi)$ attains its maximum value with respect to $\psi$ at $\psi=0.$ Next, we compute the value at $\psi=\pi$, and we get
\begin{align*}
    \restr{\frac{\partial^2 \tilde{K}_N}{\partial \psi^2}}{\psi=\pi}&=\sum_{n=0}^{N}r^n(2n+1)L'_n(-1)=\frac{1}{2}\sum_{n=1}^{N}(-1)^{n+1}n(n+1)(2n+1)r^n.
\end{align*}
We need to mention the fact that $N$ is odd now plays an important role in determining the sign of the second derivative. We chose $a_n=n(n+1)(2n+1)r^n,$ $n\in \mathbb{N}$ and write the expression as
\begin{align*}
    \restr{\frac{\partial^2 \tilde{K}_N}{\partial \psi^2}}{\psi=\pi}=\frac{1}{2}\left[(a_1-a_2)+(a_3-a_4)+. . .+(a_{N-2}-a_{N-1})+a_{N}\right].
\end{align*}
Therefore, $\restr{\frac{\partial^2 \tilde{K}_N}{\partial \psi^2}}{\psi=\pi}\geq 0$ if and only of $\frac{a_{n+1}}{a_n}\leq 1,$ for all $1\leq n \leq N.$ Then we compute
\begin{align*}
    \frac{a_{n+1}}{a_{n}}=\frac{(2n+3)(n+2)}{n(2n+1)}r\leq 1
\end{align*}
iff $r \leq \frac{n(2n+1)}{(2n+3)(n+2)}$. Therefore, we prove that $\tilde{K}_N(r,\psi)$ attains global minimum value with respect to $\psi$ at $\psi=\pi$ for $r\leq \frac{n(2n+1)}{(2n+3)(n+2)}=r(N).$ Consequently, we deduce that
\begin{align}
    \min_{\psi\in [0,\pi]}\tilde{K}_N(r,\psi)=\tilde{K}_N(r,\pi)=S_N(r,\pi)=\min_{\psi\in[0,\pi]}S_N(r,\psi) \text{ for all } 0\leq r <r(N).
\end{align}
We now have to check the positivity of $K_N(r,\pi)$. The expression of $K_N(r,\pi)$ is given by  
\begin{align*}
    \tilde{K}_N(r,\pi)&=\sum_{n=0}^{N}(-1)^n(2n+1)r^n\\
    &=\frac{1-r-(2N+3)r^{N+1}-(2N+1)r^{N+2}}{(1+r)^2}.
\end{align*}
The denominator is strictly positive for $0\leq r <1.$ We have to analyze the sign of the numerator. We start with
\begin{align*}
    1-r-(2N+3)r^{N+1}-(2N+1)r^{N+2}&\geq 1-r-4(N+1)r^{N+1}=y-4x(1-y)^x
\end{align*}
with $y=1-r$ and $x=N+1.$ We have to find appropriate $y=y(x)$ such that
\begin{align}
    y-4x(1-y)^x\geq 0
\end{align}
holds. We use the Lambert function to achieve our goal. The Lambert function $W(x)$ is a unique solution of the equation $W(x)e^{W(x)}=x$ for $x\geq 0.$ We take $\tilde{y}:=\frac{W(4x^2)}{x}$. Therefore, we get $W(4x^2)=x\tilde{y}$. We know $W(4x^2)$ satisfies the relation $W(4x^2) e^{W(4x^2)}=4x^2$. Therefore, we obtain $\tilde{y} e^{x\tilde{y}}=4x.$ We can express $\tilde{y}$ as 
\begin{align*}
    \tilde{y}=4xe^{-x\tilde{y}}.
\end{align*}
We know from \cite{hassani2012approximation} that $W(x)<\ln{x}$ for $x>e.$ Therefore, we have $\tilde{y}=\frac{W(4x^2)}{x}\leq \frac{2\ln{(2x)}}{x}$. For all $N\geq 4,$ we get $0<\tilde{y}<1.$ Then, we use the inequality $\ln{(1-\tilde{y})}\leq -\tilde{y}$ to derive
\begin{align*}
    4x(1-\tilde{y})^x \leq 4xe^{-x\tilde{y}}=\tilde{y}.
\end{align*}
Therefore, $\tilde{y}-4x(1-\tilde{y})^x\geq 0$ holds for $\tilde{y}=\frac{W(4x^2)}{x}.$ The function $t\mapsto t-4x(1-t)^x$ is monotonically increasing on $[0,1).$ Consequently, we have there exists $y_0$ with $\tilde{y}\leq y_0<1$ such that
\begin{align}\label{eqn:ey}
     y-4x(1-y)^x\geq 0 \text{ for all } y\in [y_0,1).
\end{align}
We have for all $N\geq 4$, $\tilde{y}=\frac{W(4x^2)}{x}\leq \frac{2\ln{(2x)}}{x}:=y_0.$ We now insert $y=(1-r)$ and $x=N+1$ in \eqref{eqn:ey} to deduce that
\begin{align*}
    1-r-4(N+1)r^{N+1}\geq 0 \text{ if }r\leq 1-\frac{2\ln{(2(N+1))}}{N+1}.
\end{align*}
This completes the proof of the theorem.
\end{proof}

We now want to determine $r_{num}$, where $r_{num}=\arg \max\{r \; |\; \Tilde{K}_N(r, \psi)=\sum_{n=0}^{N} (2n+1)r^nL_n(\cos{\psi})\geq 0 \text{ for all } \psi\in [0, \pi] \}$. We choose $\delta_{num}=1-r_{num}$. We plotted the changes of $1/\delta_{num}$ for the different values of $N$ in the Figure \ref{fig:Nvsdel}.
\begin{center}
\begin{tabular}{ |c|c|c|c|c|c|c|c|c|c|c|} 
 \hline
 $N$ & 5 & 10 & 20 & 30  &50  &70 &80 &90 &100\\
 \hline
 $r_{num}$ &0.52994 &0.6996 &0.78942 &0.82934 &0.87924  &0.8992 & 0.90918 & 0.91916 &0.91916\\
 \hline
\end{tabular}
\end{center}
\begin{figure}[H]
    \centering
    \includegraphics[width=0.8\linewidth]{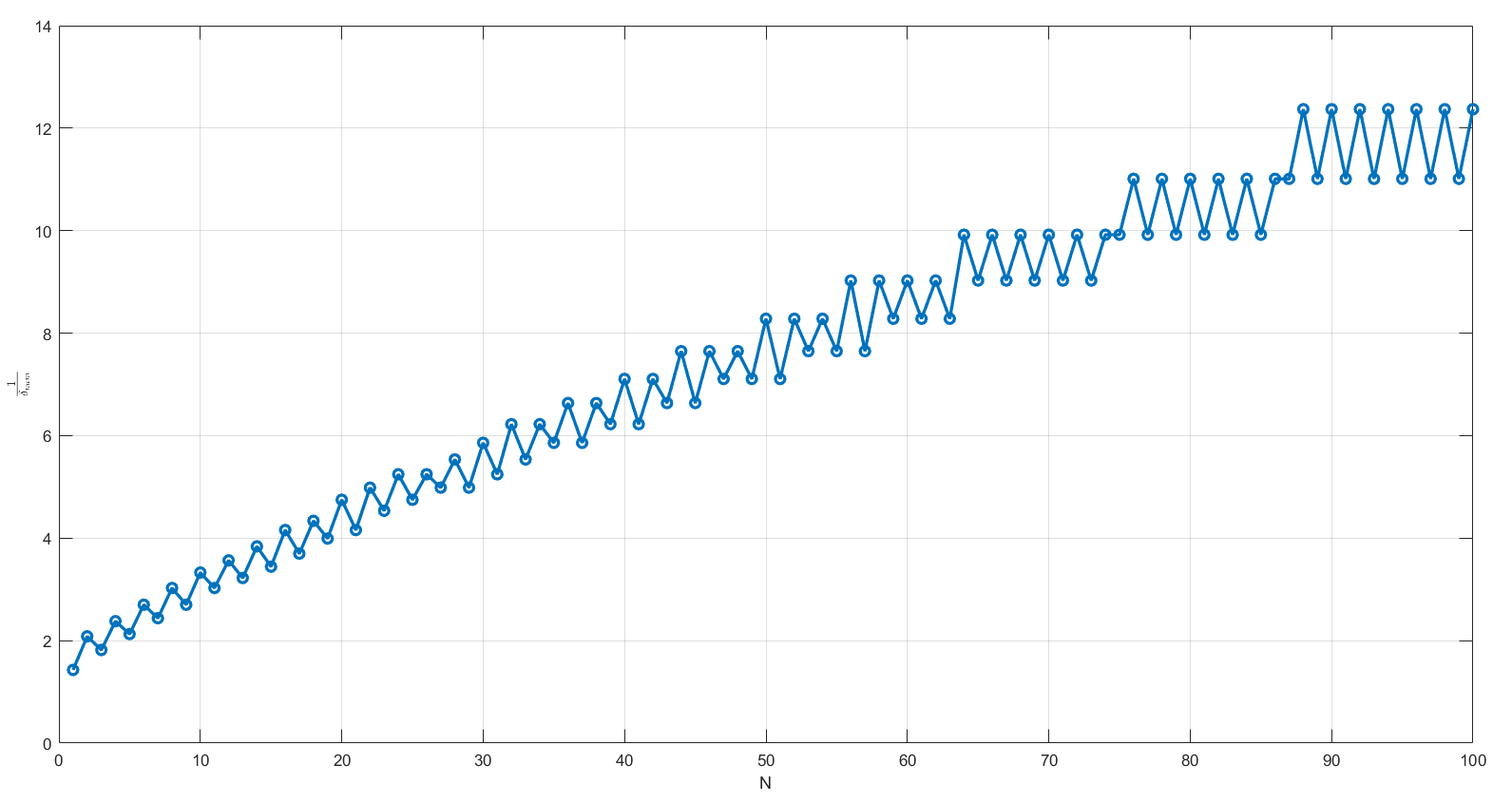}
     \caption{Plot of $\frac{1}{\delta_{num}}$ vs $N$ for up to $N=100.$}
    \label{fig:Nvsdel}
\end{figure}
We establish in Theorem \ref{thm:main} that $\tilde{K}_N(r,\psi)\geq 0$ for all $\psi\in [0,\pi]$ and $r\leq (1-\delta_{th}(N))$ with $\delta_{th}(N)=\frac{2\ln{2(N+1)}}{(N+1)}$. Clearly, we can see $\delta_{th}(N)\geq \delta_{num}(N)$ holds. We compare the value of $\frac{1}{\delta_{num}(N)}$ and $\frac{1}{\delta_{th}(N)}$ for discrete $N\in [4,100]$ and the corresponding values are plotted in the 
\begin{figure}[H]
    \centering
    \includegraphics[height=7.5cm, width=9.6cm]{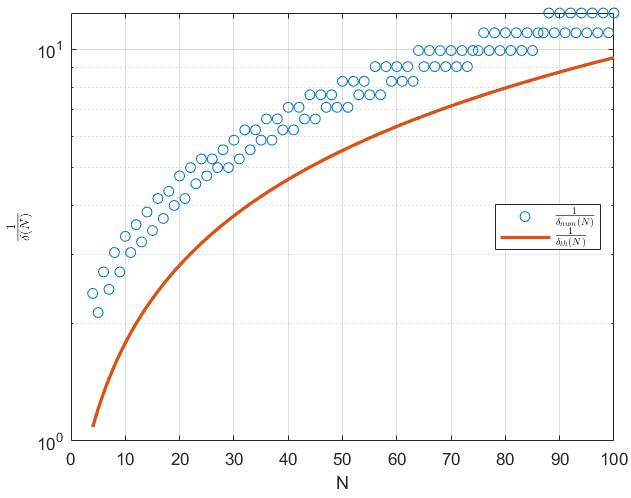}
    \caption{Plot of $\frac{1}{\delta_{num}}$ and $\frac{1}{\delta_{th}}$ vs $N$}
    \label{fig:both}
\end{figure}
Figure \ref{fig:both}. We also plotted the quotient $q(N)=\frac{\delta_{num}(N)}{\delta_{th}(N)}$ for $N\in [4,100]$ and is shown in the Figure \ref{fig:qut}.
\begin{figure}[H]
    \centering
    \includegraphics[height=6.5cm, width=10cm]{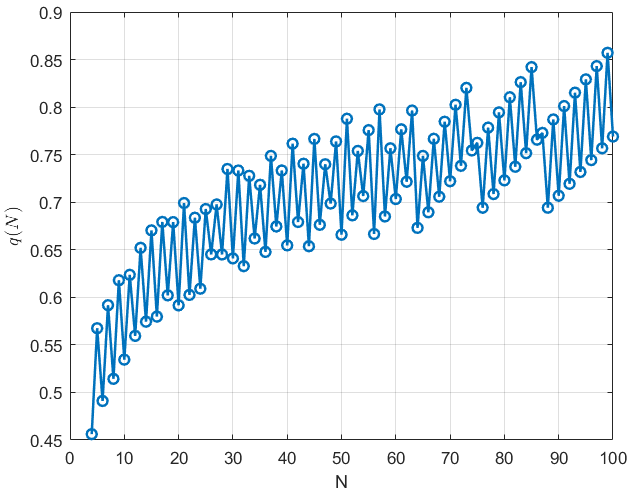}
    \caption{Plot of $q(N)=\frac{\delta_{num}(N)}{\delta_{th}(N)}$ vs $N$ }
    \label{fig:qut}
\end{figure}
We derive a new version of the maximum principle for the case when the Dirichlet boundary data is projected into the finite-dimensional space $\mathcal{S}_N^1$, explicitly given by \eqref{eqn:bsp}, by using the result of Theorem \ref{thm:main}. For this, we introduce a new notation
\begin{align*}
    B_1^{+}(N):=\{r\eta\in B_1 \; \big| \; 0\leq r\leq r_N, |\eta|=1 \} \text{ for } N\geq 4 \text{ odd }
\end{align*}
to describe the subdomain of $B_1(N)$ where the projected Poisson kernel $\tilde{K}_N$ is positive. Therefore, the new version of the maximum principle is (the fundamental maximum principle is stated in $\eqref{eqn:mp}-\eqref{eqn:ave}$) as follows:
\begin{thm}\label{thm:nmp}
    For every odd $N\geq 4$, the following properties holds 
    \begin{align}
       K_N(r\eta, \xi)&\geq 0 \text{ for all } r\eta \in B_1^{+}(N), \xi\in \partial B_1, \label{eqn:ptk}\\
      \frac{1}{4\pi}\int_{|\xi|=1}K_N(r\eta, \xi)\; d\xi &=1 \text{ for all }  0\leq r<1, \label{eqn:atk}\\
       \min_{\xi\in \partial B_1} f(\xi) \leq (\Delta^{-1}_1P_N^1f)(r\eta, \xi)&\leq \max_{\xi\in \partial B_1} f(\xi), \text{ for all }f\in \mathcal{C}_D \text{ and }r\eta\in B_1^{+}(N), \label{eqn:ulbtk}
       \end{align}
       where $K_N(r\eta, \xi)$ is explicitly given in the Lemma \ref{lemma:fmv}.
\end{thm}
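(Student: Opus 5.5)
Theorem~\ref{thm:nmp} follows by combining Theorem~\ref{thm:main}, Lemma~\ref{lemma:fmv}, and the orthogonality of the Legendre polynomials. We prove the three assertions in turn.

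\textbf{Positivity \eqref{eqn:ptk}.} For $r\eta\in B_1^{+}(N)$ we have $0\le r\le r_N$. Put $\cos\psi=\eta\cdot\xi$ with $\psi\in[0,\pi]$. By \eqref{eqn:eqtk}, $K_N(r\eta,\xi)=\tilde K_N(r,\psi)$. Theorem~\ref{thm:main} applies for odd $N\ge 4$ and gives $\tilde K_N(r,\psi)\ge 0$, which is \eqref{eqn:ptk}.

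\textbf{Unit mean \eqref{eqn:atk}.} Fix $\eta$ and use coordinates on the unit sphere with polar axis $\eta$, so that $d\xi=\sin\psi\,d\psi\,d\phi$. Then
\begin{align*}
\frac{1}{4\pi}\int_{|\xi|=1}K_N(r\eta,\xi)\,d\xi=\frac12\sum_{n=0}^{N}(2n+1)r^n\int_{-1}^{1}L_n(t)\,dt .
\end{align*}
Since $L_0\equiv 1$, orthogonality gives $\int_{-1}^1 L_n\,dt=\int_{-1}^1 L_nL_0\,dt=0$ for $n\ge1$ and $=2$ for $n=0$. Hence the mean equals $1$ for every $0\le r<1$.

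An alternative argument: apply Lemma~\ref{lemma:fmv} with $f\equiv1$. Since $P_N^1 1=1$ and the harmonic extension of $1$ is $1$, the integral equals $1$. This also shows that \eqref{eqn:atk} holds on the whole ball $B_1$, not only on $B_1^+(N)$.

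\textbf{Maximum principle \eqref{eqn:ulbtk}.} Let $f\in\mathcal{C}_D$ and $r\eta\in B_1^{+}(N)$. By Lemma~\ref{lemma:fmv},
\begin{align*}
(\Delta_1^{-1}P_N^1 f)(r\eta)=\frac{1}{4\pi}\int_{|\xi|=1}K_N(r\eta,\xi)f(\xi)\,d\xi .
\end{align*}
Set $m=\min f$ and $M=\max f$; for piecewise continuous $f$ these are the infimum and supremum. The discontinuity set $D_f$ is finite and therefore has measure zero, so $m\le f(\xi)\le M$ holds almost everywhere. Multiplying by $K_N(r\eta,\xi)\ge 0$ from \eqref{eqn:ptk} preserves these inequalities. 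Integrating and using \eqref{eqn:atk} yields $m\le(\Delta_1^{-1}P_N^1f)(r\eta)\le M$.

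\textbf{Main obstacle.} All the difficulty sits in the kernel positivity, and that is already supplied by Theorem~\ref{thm:main}. The remaining points are minor:
\begin{itemize}
\item The unit mean \eqref{eqn:atk} must hold without any restriction on $r$; the orthogonality argument gives this directly.
\item Lemma~\ref{lemma:fmv} must be applicable to piecewise continuous data; it is, since $\mathcal{C}_D\subset L^2(\partial B_1)$.
\end{itemize}
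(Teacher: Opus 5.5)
Your proposal is correct and follows the paper's proof: positivity comes from \eqref{eqn:eqtk} and Theorem~\ref{thm:main}, and the bounds come from the kernel representation of Lemma~\ref{lemma:fmv} combined with nonnegativity and unit mean. The only difference is in the unit-mean step: you compute \eqref{eqn:atk} directly from Legendre orthogonality ($\int_{-1}^{1}L_n\,dt=0$ for $n\ge 1$), whereas the paper uses $K_N=P_N^1K$, $P_N^1 1=1$ and \eqref{eqn:ave}, which is essentially your alternative argument.
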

\begin{proof}
    We have from \eqref{eqn:eqtk} that $K_N(r\eta, \xi)=\tilde{K}_N(r,\psi),$ for all $(r,\psi)\in [0,1)\times [0,\pi].$ Accordingly, the result \eqref{eqn:ptk} follows from Theorem \ref{thm:main}. We know that $K_N=P_N^1K$ and therefore
    \begin{align*}
        \frac{1}{4\pi}\int_{|\xi|=1}K_N(r\eta, \xi)\; d\xi=\frac{1}{4\pi}\int_{|\xi|=1}K(r\eta, \xi)P_N^1(1)\; d\xi=\frac{1}{4\pi}\int_{|\xi|=1}K(r\eta, \xi)\; d\xi=1
    \end{align*}
   by \eqref{eqn:ave}. This proves \eqref{eqn:atk}. From Lemma \ref{lemma:fmv}, we obtain
   \begin{align*}
       \Delta^{-1}P_N^1f(x):=\Delta^{-1}P_N^1f(r\eta)=\frac{1}{4\pi}\int_{|\xi|=1}K_N(r\eta, \xi)f(\xi)\; d\xi, \text{ for all }x\in B_1.
   \end{align*}
   Therefore, for all $r\eta, r\xi\in B_1^{+}(N),$ we have $K_N(r\eta, \xi)\geq0.$ Together with \eqref{eqn:atk} this implies \eqref{eqn:ulbtk}.
\end{proof}

\begin{rem}
    Theorem \ref{thm:nmp} is new.
\end{rem}

We apply Theorem \ref{thm:nmp} to analyze the convergence of the inexact domain decomposition methods in $3$D. We have to calculate
\begin{align*}
    \|\Lambda_{1,N}\|_{\infty}=\max_{w\in C(\overline{\Gamma}_1)}\frac{\|\Lambda_{1,N}w\|_{L^\infty(\overline{\Gamma}_2)}}{\|w\|_{L^\infty(\overline{\Gamma}_1)}}=\max_{w\in C(\overline{\Gamma}_1)}\frac{\|R_{\overline{\Gamma}_2}\Delta^{-1}_1P_N^1(0,w)\|_{L^\infty(\overline{\Gamma}_2)}}{\|w\|_{L^\infty(\overline{\Gamma}_1)}}
\end{align*}
We divide $\overline{\Gamma}_2$ into two parts. $\overline{\Gamma}_2=\Gamma_2^{(N)}\bigcup (\overline{\Gamma}_2\setminus \Gamma_2^{(N)})$. Therefore, we can write
\begin{align*}
\|R_{\overline{\Gamma}_2}\Delta^{-1}_1P_N^1(0,w)\|_{L^\infty(\overline{\Gamma}_2)}=\max\left\{\|R_{\overline{\Gamma}_2}\Delta^{-1}_1P_N^1(0,w)\|_{L^\infty(\Gamma_2^{(N)})}, \|R_{\overline{\Gamma}_2}\Delta^{-1}_1P_N^1(0,w)\|_{L^\infty(\overline{\Gamma}_2\setminus\Gamma_2^{(N)})}\right\}
\end{align*}
We consider $\tilde{w}$ such that
\begin{align*}
    \tilde{w}(x)=\begin{cases}
        w(x) \text{ for }x\in \overline{\Gamma}_1\\
        0 \text{ otherwise}
    \end{cases}
\end{align*}
Let $x=r\eta \in \Gamma_2^{(N)}$, then we have from Lemma \ref{lemma:fmv} that
\begin{align*}
    \left|(\Delta^{-1}_1P_N^1(0,w))(r\eta)\right|&=\left|\frac{1}{4\pi}\int_{|\xi|=1}K_N(r\eta, \xi)\tilde{w}(\xi)\; d\xi\right|\\
    &\leq \frac{1}{4\pi}\int_{\substack{|\xi|=1\\\xi\in \overline{\Gamma}_1}}K_N(r\eta, \xi)\; d\xi \|w\|_{L^\infty(\overline{\Gamma}_1)}\\
    &\leq \frac{1}{4\pi}\left(\int_{|\xi|=1}K(r\eta, \xi)(P_N^1\chi_{\Gamma_1})(\xi)\; d\xi\right)\|w\|_{L^\infty(\overline{\Gamma}_1)}\\
    &=(\Delta^{-1}P_N^1\chi_{\Gamma_1})(\xi)\|w\|_{L^\infty(\overline{\Gamma}_1)}.
\end{align*}
We know that
\begin{align*}
    (P_N^1\chi_{\Gamma_1})(\xi):=(P_N^1\chi_{\Gamma_1})(\theta,\phi)=\sum_{n=0}^{N}\sum_{m=-n}^{n}a_{nm}Y_n^m(\theta, \phi),\\
    \text{ with } a_{nm}=\langle \chi_{\Gamma_1}, Y_n^m\rangle_{L^2(\partial B_1}=\int_{0}^{\pi}\int_{0}^{2\pi}\chi_{\Gamma_1}(\theta, \phi)\overline{Y_n^m(\theta,\phi)}\sin{\theta}\;d\phi d\theta
\end{align*}



 	\bibliographystyle{acm}
	\bibliography{cite}

\end{document}